\newtheorem{theorem}{Theorem}
\newtheorem{definition}[theorem]{Definition}
\newtheorem{remark}[theorem]{Remark}
\newenvironment{proof}[1][Proof]{\noindent\textbf{#1.} }{\ \rule{0.5em}{0.5em}}
\begin{document}

\title{Local Fractional Calculus currently.\\
 A Mosaic of Definitions and the Quest for Unification.}
\author{Juan E. N\'{a}poles Valdes$^{1,2}$}
\maketitle

\begin{abstract}
In this paper, we present an overview of the development of one of the most dynamic areas of mathematics today: local differential operators of non-integer order. The underlying question is whether we are witnessing a period of consolidation and are on the way to a general formulation of this fascinating topic.
\end{abstract}

$\bigskip $

$^{1}$UNNE, FaCENA, Av. Libertad 5450, (3400) Corrientes, Argentina;

$^{2}$UTN, FRRE, French 414, (3500) Resistencia, Chaco, Argentina;

jnapoles@exa.unne.edu.ar; jnapoles@frre.utn.edu.ar

\textit{AMS Subject Classification (2010): }26A33, 26A24

\textit{Key words and phrases:} Keywords: Fractional Calculus, generalized calculus.

\section{Introduction.}

The current landscape of fractional calculus is defined by a vigorous debate over the best way to define and apply local operators. The discussion has evolved from a simple search for generalizations to a deep exploration of properties, kernels, and the connections between different definitions. 

On September 30, 1695 (see \cite{MR}), when Leibniz answers L'Hopital's question "What does $\frac{d^{n}}{dx^{n}}f(x) $ mean if $n=\frac{ 1}{2}$?" The Fractional Calculus is born, which in the last 50 years has had a remarkable development, with two large areas: global and local. This area refers to the generalization of differentiation and integration considering non-integer (i.e., fractional) orders. 

As we said, the local fractional calculus is a natural generalization of Calculus, and has an equally long mathematical history, we must say that it is not until recently, that it has started to play an important role in applications (\cite{Ba}). Actually, it was considered an area of Pure Mathematics and its foundations were not known or accessible to the public. Next year will be the 50th anniversary of the publication of the first text referring to its basic facts (\cite{OS}). From this fact, applications have multiplied in various fields such as epidemiology, various chemical and physical processes, population, electrical circuits, chaos, mechanics and many more (see \cite{KST}, \cite{ LLS}, \cite {MR} and \cite{P}).

A historical account of differential operators (local and global) is presented in \cite{Atan} (Chapter 1), beginning with Newton and ending with Caputo. In this book, the definition of a new, local derivative using one parameter is presented, and various applications are shown, illustrating its breadth and strength. In section 1.4 it is stated: “Therefore, we can conclude that both the Riemann-Liouville operator and the Riemann-Liouville Caputo operator are not derivatives, and therefore are not fractional derivatives, but fractional operators”, based on the fact that the classical notion of derivative rests on the limit of an incremental quotient and therefore is local, referred to the point. We want to point out on the other hand “the local fractional operator is not a fractional derivative" (see \cite{US}, p. 24), specifically: fractional derivatives are not derivatives, they are integrals! hence the term fractional operator before fractional derivative. The local operators, which will be our focus in this note, are new tools (sometimes not accepted due to their local character and affirming that they are just variants of the classical derivative) that have shown their strength in various applications (see \cite{ Atan2}). Many cases of local differential operators are known, we will see that many of these operators are particular cases of our 2020 definition and, more importantly, one of the exclusive features used to characterize classical fractional derivatives, the Leibniz Rule, does not It is exclusive to such operators, since we show a local operator that does not comply with said rule.

In this note we present the recent development of the so-called Non-Integer Order Local Calculus, which is the correct name (sometimes we use the name Generalized Calculus, although it does not illustrate the concept well). 

To facilitate reading, we will present several of the most commonly used definitions of derivatives and local integrals (interested readers can consult \cite{CT}, \cite{K} and \cite{VGLN}). It is clear that the definitions presented can be extended to the case where the order is in $(n,n+1)$, with $n \geq 1$.

We believe that readers are familiar with the classical Calculus, so we will not present it.

The following definitions represent an effort to create local operators that are either direct generalizations of the classical definition of the derivative, or that rely on concepts from fractal geometry to characterize the “smoothness" of a function.

\section{Specific Local Fractional Derivatives: A Direct and Unified Approach}

Although some attempts to use new local operators were recorded in the 1960s, it was not until Kolwankar and Gangal (\cite{KG96,KG97,KG98}), that what is now known as the local fractional calculus (also called local fractional calculus) appeared. fractal).

This definition stands out for its innovative and fundamental approach, as it is based on the fractal geometry of the function. Instead of using a kernel, the derivative is intrinsically linked to the fractal dimension of the function's graph.

The Kolwankar-Gangale derivative, also known as the “fractional derivative of fractal dimension", is defined as the limit of the quotient of an operator that depends on the fractal dimension of the function.

This approach is particularly useful for analyzing functions that are not differentiable in the classical sense but exhibit fractal behavior. It is a powerful tool in the study of irregular and complex phenomena.

This is the first formal definition of a local operator that generalizes the classical derivative, and we have the following definition:

\begin{definition} 
Let be a function $f$ such that $f:[0,1]\longrightarrow R$, if exists and is finite the following limit

\begin{equation} 
D^{ q }f(y)=\lim _{ x\to y } \frac { { d }^{ q }(f(x)-f(y)) }{ d(x-y)^{ q } } ,
\end{equation}

then we say that the local fractional derivative (LFD) of order $q$, at $x=y$, exists.
\end{definition}

Later, Parvate and Gangal (see \cite{PG}) introduced the definition of fractal derivative as follows, using it to study the fractal behavior of various functions:

\begin{equation} \label{e:0}
{ x }_{ 0 }{ D }_{ x }^{ \alpha  }f(y)=\frac { { d }^{ \alpha  }f(x) }{ { dx }^{ \alpha  } } \left( { x }_{ 0 } \right) =F-\lim _{ x\to { x }_{ 0 } } \frac { f(x)-f({ x }_{ 0 }) }{ { S }_{ F }^{ \alpha  }(x)-{ S }_{ F }^{ \alpha  }({ x }_{ 0 }) } ,
\end{equation}

where the right hand side is the notion of the limit by the points of the fractal set $F$. 

\begin{definition} 
The integral staircase function ${ S }_{ F }^{ \alpha  }(x)$ of order $\alpha$ for a set $F$ is given by:

\begin{equation} 
{ S }_{ F }^{ \alpha }\left( x \right) = \left\{ \begin{array}{lcc}
             { \gamma  }^{ \alpha  }\left[ F,a,x \right]  &   if  & x\ge a \\
             \\ { -\gamma  }^{ \alpha  }\left[ F,a,x \right]  &  if  & x<a 
             \end{array}
   \right.
\end{equation}

where $a$ is an arbitrary but fixed real number.
\end{definition}

and the mass function is defined in this way\\

\begin{definition} 
The mass function ${ \gamma  }^{ \alpha  }\left[ F,a,b \right]$ can written as (see \cite{Y11a} and \cite{Y11b}):

\begin{equation}
{ \gamma  }^{ \alpha  }\left[ F,a,b \right] =\lim _{ \delta \longrightarrow 0 }{ { \gamma  }_{ \delta  }^{ \alpha  }[F,a,b]=\frac { { \left( b-a \right)  }^{ \alpha  } }{ \Gamma \left( 1+\alpha  \right)  }  }.
\end{equation}
\end{definition}

Another version can be found at \cite{AC}:

\begin{equation}
{ x }_{ 0 }{ D }_{ x }^{ \alpha  }f(y)=\frac { { d }^{ \alpha  }f(x) }{ { dx }^{ \alpha  } } \left( { x }_{ 0 } \right) =\lim _{ x\rightarrow { x }_{ 0 }^{ \sigma  } }{ { D }_{ y,-\sigma  }^{ \alpha  } } \left[ \sigma \left( f(x)-f({ x }_{ 0 } \right) (x) \right] ,
\end{equation}

with $\sigma =\pm $ and ${ D }_{ y,-\sigma  }^{ \alpha  }$ is the Riemann-Liouville derivative.\\
The following notion can be found in \cite {Ch}:

\begin{equation}
{ x }_{ 0 }{ D }_{ x }^{ \alpha  }f(y)=\frac { { d }^{ \alpha  }f(x) }{ { dx }^{ \alpha  } } \left( { x }_{ 0 } \right) =\lim _{ x\rightarrow { x }_{ 0 } }{ \frac { f(x)-f({ x }_{ 0 }) }{ { x }^{ \alpha  }-{ x }_{ 0 }^{ \alpha  } }  }, 
\end{equation}

obtained from \eqref{e:0} under assumption ${ { x }^{ \alpha  }-{ x }_{ 0 }^{ \alpha  } }={ (x-{ x }_{ 0 }) }^{ \alpha  }$.

Yang's derivative (see \cite{Y11b,He2}) is a relatively generalized recent development of the conformable derivative. Its main innovation lies in the modification of the limit, which introduces a general function for the increment, giving it considerable flexibility to model different types of phenomena.

It allows us to address problems in which the behavior of the system's “memory" is not constant, but rather depends on a weight function or a power law more complex than that of the conformal operator. It is a powerful tool in the investigation of complex phenomena, such as those found in materials physics or biology.

The conformable derivative is one of the most elegant and straightforward proposals of the last decade and has gained widespread popularity due to its simplicity.

The conformable derivative of order $\alpha$ is a generalization of the classical derivative that uses a simple and straightforward limit (\cite{KHYS}).

This definition of local derivative opens a new direction of work, which remains very relevant in mathematical research due to its multiple applications.

So they presented the following definition(see also \cite{Ab}).

Thus, for a function $f:(0,\infty )\rightarrow R$ the conformable derivative of order $0<\alpha \le 1$ of $f$ at $t > 0$ was defined by

\begin{equation} \label{k:1}
{ T }_{ \alpha }f(t)=lim_{\varepsilon \rightarrow 0 }{ \frac { f(t+\varepsilon { t }^{ 1-\alpha })-f(t) }{ \epsilon } } ,
\end{equation}

and the fractional derivative at $0$ is defined as ${ T }_{ \alpha }f(0)=lim _{t\rightarrow 0 }{ { T }_{ \alpha }f(t)}$.

In a work from the same year (cf. \cite{K,Kat,kat}) another conformable derivative is defined in a very similar way. Let $f$ be a function of $(0,\infty )\rightarrow \mathbb{R}$, $t>0$ define the derivative of order $\alpha$ with $0<\alpha < 1$ as the expression  ${ { D }^{ \alpha } }f(t)=lim_{\varepsilon \rightarrow 0 }{ \frac { f(te^{ { \varepsilon t }^{ -\alpha } })- f(t) }{ \varepsilon } } $, of course, if ${ { D }^{ \alpha } }f(t)$ exists at some $(0,a)$ with $a>0$ then defines the derivative of order $\alpha$ at $0$ as ${ { D }^{ \alpha } }f(0)=lim_{t\rightarrow 0 }{ { { D }^{ \alpha } }f (t) }$. This is a very important definition because it unifies several other local derivatives into a single framework.

The Katugampola derivative is defined as a generalization of the conformal derivative, using an additional parameter. This definition is so flexible that by choosing specific values for the parameter, the classical derivative, the conformal derivative, and the Riemann-Liouville derivative can be obtained.

\cite{K} introduces a new twist when it defines a general derivative as follows, $f:\mathbb{R}\rightarrow \mathbb{R}$ is a function, $\alpha$ a real number, the derivative of fractional order can be thought of as ${ f } ^{ \alpha }(t)=lim_{ \varepsilon \rightarrow 0 }{ \frac { { f }^{ \alpha }(t+\varepsilon )-f(t) }{ { (t+\varepsilon ) } ^{ \alpha }-{ t }^{ \alpha } } } $.

Its main advantage is that it satisfies all the properties of the classical derivative (product rule, quotient rule, chain rule, etc.) in a simple and natural way, which facilitates its use in solving differential equations.

All these results, although they do not exactly coincide with the direction of our work, we present them so that readers have a more complete picture and because they have become relevant again in recent years.

\section{Post Kahlil derivative}

He gave a new fractal derivative in theis way (\cite{He}):

\begin{equation}
{ x }_{ 0 }{ D }_{ x }^{ \alpha  }f(y)=\frac { { d }^{ \alpha  }f(x) }{ { dx }^{ \alpha  } } \left( { x }_{ 0 } \right) =\lim _{ \Delta x\rightarrow { L }_{ 0 } }{ \frac { f(x)-f({ x }_{ 0 }) }{ K{ L }_{ 0 }^{ \alpha  } }  } . 
\end{equation}

Taking into account 

\begin{equation*}
{ H }^{ \alpha  }(F\cap (x,{ x }_{ 0 }))=(x-{ x }_{ 0 }^{ \alpha  })=\frac { K }{ \Gamma (1+\alpha ) } { L }_{ 0 }^{ \alpha  }.
\end{equation*}

This is the unified notation of \cite{GYK}.

In this address we have another definition (\cite{Yetall13} and \cite{Yetall15}), as follows:

\begin{equation}
{ D }_{  }^{ \alpha  }f(x)=\frac { { d }^{ \alpha  }f(x) }{ { dx }^{ \alpha  } } \left( { x }_{ 0 } \right) =\lim _{ x\rightarrow { x }_{ 0 } }{ \frac { { \Delta  }^{ \alpha  }\left[ f(x)-f({ x }_{ 0 }) \right]  }{ (x-{ x }_{ 0 })^{ \alpha  } }  } ,
\end{equation}

$ (x-{ x }_{ 0 })^{ \alpha  }$ is a measure fractal (\cite{Yetall15})  and  ${ \Delta  }^{ \alpha  }\left[ f(x)-f({ x }_{ 0 }) \right] \cong \Gamma (1+\alpha )\Delta \left[ f(x)-f({ x }_{ 0 }) \right]$.

In \cite{Yetall16} we have:

\begin{equation}
{ D }_{  }^{ \alpha  }f(x)=\frac { { d }^{ \alpha  }f(x) }{ { dx }^{ \alpha  } } \left( { x }_{ 0 } \right) =\lim _{ x\rightarrow { x }_{ 0 } }{ \frac { f(x)-f({ x }_{ 0 }) }{ (x-{ x }_{ 0 })^{ \alpha  } }  } .
\end{equation}

In 2018 we introduced a new local derivative, with a very distinctive property: when $\alpha \rightarrow 1$ we do not get the ordinary derivative. We call this derivative non-conformable, to distinguish it from the previous known ones, since when $\alpha \rightarrow 1$ the slope of the tangent line to the curve at the point is not preserved.

Be $\alpha \in (0,1]$ and define a continuous function $f:[t_{0},+\infty)\rightarrow \mathbb{R}$.

First, let's remember the definition of $_{1}N^{\alpha }f(t)$, a non conformable fractional derivative of a function in a point $t$ defined in [9] and that is the basis of our results, that are close resemblance of those found in classical qualitative theory.

\begin{definition}\label{d:1}
Given a function $f:[t_{0},+\infty )\rightarrow\mathbb{R}$, $t_{0}>0$. Then the $_{1}N$-derivative of $f$ of order $\alpha $\ is defined by $_{1}N^{\alpha }f(t)=\underset{\varepsilon \rightarrow 0}{\lim }\frac{f(t+\varepsilon e^{t^{-\alpha }})-f(t)}{\varepsilon }$ for all $t>0$, $\alpha \in (0,1)$. If $f$ is $\alpha$-differentiable in some $(0,a)$, and $\underset{t\rightarrow 0^{+}}{\lim }{_{1}N^{(\alpha )}f(t)}$ exists, then define $_{1}N^{\alpha }f(0)=\underset{t\rightarrow 0^{+}}{\lim }{_{1}^{(\alpha )}f(t)}$.
\end{definition}

If the above derivative of the function $x(t)$ of order $\alpha$ exists and is finite in $(t_{0},\infty )$, we will say that $x(t)$ is $_1 N^{\alpha}= N_{e^{t^{\alpha}}}$-differentiable in $I=(t_{0},\infty )$.

\begin{remark}
The use in the previous definitions, in particular in Definition \ref{d:1}, of the limit of a quotient of increments, similar to the classical case, unlike the classical fractional derivatives, which use a certain integral, allows us to interpret physically these local derivatives, and $_{1}N$ in the following way. Let us consider a certain point $P$ that moves in a straight line in $\mathbb{R}_{+}$. At time instants $t_{1}=t$ and $t_{2}=t+he^{t^{-\alpha }}$ where $h>0$ and $\alpha \in (0,1 ]$, we will denote by $S(t_{1})$ and $S(t_{2})$ the space traveled by the point $P$ in the time instants $t_{1}$ and $t_{2}$, respectively Then $\frac{S(t_{2})-S(t_{1})}{t_{2}-t_{1}}=\frac{S(t+he^{t^{ -\alpha }})- S(t)}{he^{t^{-\alpha }}}$ is the $_{1}N$-average velocity of point $P$ in the time interval $(t,he^{t^{-\alpha }})$. Passing to the limit in the previous expression, we have

\begin{equation*}
\underset{h\rightarrow 0}{Lim}\frac{S(t+he^{t^{-\alpha }})-S(t)}{he^{t^{-\alpha }}},
\end{equation*}

is the $_{1}N$-instantaneous velocity of point $P$ at any time $t>0$. Hence, we can conclude that a physical interpretation of the $_{1}N$-derivative is the velocity of point $P$ at instant $t_1$, and it makes sense, because the $_{1}N$-derivative is a local operator.
\end{remark}

\begin{remark}
A group of deficiencies are pointed out to the classical fractional derivatives, several of them significant for a “differential" operator. Such shortcomings are easily overcome by local operators, in particular by the $_{1}N$-derivative. So we have

\begin{theorem}
(See \cite{GLLMN}) Let $f$ and $g$ be $_{1}N$-differentiable at a point $t>0$ and $\alpha \in (0,1]$. Then

\begin{enumerate}
\item[a)] $_{1}N^{\alpha }(af+bg)(t)=a{\hspace{0.2cm} _{1}N^{\alpha }}(f)(t)+b{\hspace{0.2cm} _{1}N^{\alpha }}(g)(t).$
\item[b)] $_{1}N^{\alpha }(t^{p})=e^{t^{-\alpha }}pt^{p-1},\ p\in \mathbb{R}.$
\item[c)] $_{1}N^{\alpha }(\lambda )=0,\ \lambda \in 
\mathbb{R}.$
\item[d)] $_{1}N^{\alpha }(fg)(t)=f_{1}N^{\alpha }(g)(t)+g_{1}N^{\alpha }(f)(t).$
\item[e)] $_{1}N^{\alpha }(\frac{f}{g})(t)=\frac{gN_{1}^{\alpha
}(f)(t)-f_{1}N^{\alpha }(g)(t)}{g^{2}(t)}.$
\item[f)] If, in addition, $f$ is differentiable then $_{1}N^{\alpha
}(f)=e^{t^{-\alpha }}f^{\prime }(t).$
\item[g)] Being $f$ differentiable and $\alpha =n$ integer, we have $
_{1}N^{n}(f)(t)=e^{t^{-n}}f^{\prime }(t)$.
\end{enumerate}
\end{theorem}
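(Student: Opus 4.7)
The plan is to reduce everything to the classical derivative by a single substitution. For $t>0$ and $\alpha\in(0,1]$ the quantity $e^{t^{-\alpha}}$ is a strictly positive finite constant, so setting $h=\varepsilon e^{t^{-\alpha}}$ gives a bijection between punctured neighborhoods of $0$ in the two variables. Inserting this substitution into the defining limit and factoring yields
\begin{equation*}
{}_{1}N^{\alpha}f(t) = \lim_{\varepsilon\to 0}\frac{f(t+\varepsilon e^{t^{-\alpha}})-f(t)}{\varepsilon} = e^{t^{-\alpha}}\lim_{h\to 0}\frac{f(t+h)-f(t)}{h}.
\end{equation*}
Because $e^{t^{-\alpha}}$ is a nonzero finite scalar at $t$, one limit exists iff the other does, so ${}_{1}N$-differentiability at $t>0$ is equivalent to classical differentiability at $t$, with ${}_{1}N^{\alpha}f(t)=e^{t^{-\alpha}}f'(t)$. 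This establishes (f); (g) is then the same statement specialized to the only admissible integer value $\alpha=1$.

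With this identification in hand, parts (a)--(e) are restatements of the corresponding classical rules, carried across by the factor $e^{t^{-\alpha}}$, which depends only on $t$ and therefore commutes with sums, scalar multiples, products, and quotients. Explicitly: (a) is the linearity of $f\mapsto f'$; (c) is $\lambda'=0$; (b) follows from $(t^{p})'=pt^{p-1}$; (d) is the Leibniz rule $(fg)'=f'g+fg'$; and (e) is the classical quotient rule, applied at any $t$ where $g(t)\neq 0$. In each case one simply multiplies both sides of the classical identity by $e^{t^{-\alpha}}$ and recognizes the resulting expression using the identity above.

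A reader who prefers to bypass the reduction and argue straight from the definition can treat each case by the same substitution. For the product rule, for instance, one splits the increment as
\begin{equation*}
\frac{(fg)(t+\varepsilon e^{t^{-\alpha}})-(fg)(t)}{\varepsilon} = f(t+\varepsilon e^{t^{-\alpha}})\frac{g(t+\varepsilon e^{t^{-\alpha}})-g(t)}{\varepsilon} + g(t)\frac{f(t+\varepsilon e^{t^{-\alpha}})-f(t)}{\varepsilon},
\end{equation*}
then invokes continuity of $f$ at $t$ (which follows from the equivalence in the first step) and passes $\varepsilon\to 0$; the quotient rule is handled analogously after writing $f/g$ with a common denominator.

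The only subtlety in the entire argument, and the step deserving care, is the equivalence in the first paragraph: one must verify that the change of variable $h=\varepsilon e^{t^{-\alpha}}$ is legitimate and that it genuinely transports one limit to the other. This is valid precisely because $t>0$ makes $e^{t^{-\alpha}}$ a bounded nonzero constant depending only on $t$, which is also why the definition handles the endpoint $t=0$ by a separate auxiliary limit. Once this observation is made, the theorem becomes an essentially formal consequence of classical calculus multiplied by a positive scalar prefactor, and no further obstacle remains.
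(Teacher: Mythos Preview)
Your argument is correct. The key observation---that for fixed $t>0$ the factor $e^{t^{-\alpha}}$ is a positive finite constant, so the substitution $h=\varepsilon e^{t^{-\alpha}}$ shows ${}_{1}N$-differentiability at $t$ is \emph{equivalent} to classical differentiability there with ${}_{1}N^{\alpha}f(t)=e^{t^{-\alpha}}f'(t)$---is exactly the right one, and once it is in place all of (a)--(g) are immediate transcriptions of the classical rules. Your alternative direct computation of the product rule is also fine and uses the same continuity input.

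As for comparison: the paper does not actually supply a proof of this theorem. It is stated with the attribution ``(See \cite{GLLMN})'' and the argument is deferred entirely to that reference, so there is no in-paper proof to set yours against. What can be said is that your reduction is the standard and essentially only natural route; the cited source establishes (f) by precisely this change of variables (the paper itself repeats the computation later, in item~5 following equation~\eqref{e:5}, when evaluating $N_{2}^{1}f(t)$), and the remaining items then follow either by quoting classical calculus, as you do, or by rerunning the usual incremental-quotient manipulations with $\varepsilon e^{t^{-\alpha}}$ in place of $h$. Your write-up covers both options. One small remark: your reading of (g) as the case $n=1$ is the only one consistent with the hypothesis $\alpha\in(0,1]$, so that specialization is correct.
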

\end{remark}

\begin{remark}
Readers can find additional details at \cite{KST} and \cite{P} and bibliography therein. 
\end{remark}

In the Second Lyapunov Method, for the study of the stability of perturbed motion, a basic tool is the well-known Calculus Chain Rule, now we will present the equivalent result, for $_{1}N^{\alpha }$, of said rule.

\begin{theorem}
(See \cite{GLLMN}) Let $\alpha \in (0,1]$, $g$ N-differentiable at $t>0$ and $f$ differentiable at $g(t)$ then $_{1}N^{\alpha }(f\circ g)(t)=f^{\prime}(g(t))_{1}N^{\alpha }g(t)$.
\end{theorem}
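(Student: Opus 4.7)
The plan is to reason from the definition
$$_{1}N^{\alpha}(f\circ g)(t)\;=\;\lim_{\varepsilon\to 0}\frac{f(g(t+\varepsilon e^{t^{-\alpha}}))-f(g(t))}{\varepsilon},$$
and to exploit the fact that $f$, being classically differentiable at $y_{0}=g(t)$, admits a Landau-type expansion $f(y)-f(y_{0})=f^{\prime}(y_{0})(y-y_{0})+r(y)(y-y_{0})$, where $r(y)\to 0$ as $y\to y_{0}$. Plugging in $y=g(t+\varepsilon e^{t^{-\alpha}})$ and dividing by $\varepsilon$ produces two pieces that I would handle separately.

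The first piece is $f^{\prime}(g(t))\cdot\bigl[g(t+\varepsilon e^{t^{-\alpha}})-g(t)\bigr]/\varepsilon$, which converges to $f^{\prime}(g(t))\cdot{_{1}N^{\alpha}g(t)}$ directly from the assumed $N$-differentiability of $g$. The second piece is a product of $r\bigl(g(t+\varepsilon e^{t^{-\alpha}})\bigr)$ and the same difference quotient, and I would argue it vanishes: the quotient stays bounded because it tends to the finite number $_{1}N^{\alpha}g(t)$, while the remainder factor tends to zero as soon as $g(t+\varepsilon e^{t^{-\alpha}})\to g(t)$.

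The one genuine step, and the main obstacle, is therefore the continuity issue---namely, justifying that $N$-differentiability forces $g(t+\varepsilon e^{t^{-\alpha}})\to g(t)$. This comes essentially for free from the identity $g(t+\varepsilon e^{t^{-\alpha}})-g(t)=\varepsilon\bigl({_{1}N^{\alpha}g(t)}+o(1)\bigr)$, which is just a reformulation of the defining limit. A cleaner shortcut, worth noting as a remark, is to observe that for fixed $t>0$ the substitution $h=\varepsilon e^{t^{-\alpha}}$ converts the $N$-difference quotient into an ordinary one, so $N$-differentiability of $g$ at $t$ is equivalent to ordinary differentiability there, and the whole theorem collapses to the classical chain rule combined with item (f) of the preceding theorem, yielding
$$_{1}N^{\alpha}(f\circ g)(t)=e^{t^{-\alpha}}(f\circ g)^{\prime}(t)=e^{t^{-\alpha}}f^{\prime}(g(t))\,g^{\prime}(t)=f^{\prime}(g(t))\cdot{_{1}N^{\alpha}g(t)}.$$
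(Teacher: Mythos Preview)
Your argument is correct. Both routes you sketch---the Landau remainder expansion and the substitution $h=\varepsilon e^{t^{-\alpha}}$ reducing everything to the classical chain rule---are valid, and the continuity step you flag is indeed the only point needing care; you handle it properly.

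However, there is nothing to compare against: the present paper does not supply its own proof of this theorem. It merely records the statement and refers the reader to \cite{GLLMN} for the argument. So strictly speaking your proposal cannot be matched to, or contrasted with, a proof appearing in this paper. If one consults the cited source, the standard approach there is essentially your second observation (that for fixed $t>0$ the factor $e^{t^{-\alpha}}$ is a nonzero constant, so $N$-differentiability coincides with ordinary differentiability and item~(f) of the preceding theorem finishes the job). Your first, more explicit, remainder-based argument is a self-contained alternative that avoids invoking item~(f) and would work equally well for more general kernels $F(t,\alpha)$ in the later $N_{F}^{\alpha}$ setting, which is a mild advantage.
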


Next we define the integral operator, “corresponding" to the derivative $_{1}N$.

\begin{definition}
Given a function $f$ of real variable and an interval $[a,b]$ of the real line, we define the non conformable fractional integral of order $\alpha $ by the expression $_{1}J_{t_{0}}^{\alpha }f(t)=\int_{t_{0}}^{t}\frac{f(s)}{
e^{s^{-\alpha }}}ds$, with $t_0 \leq t$ in $[a,b]$.
\end{definition}

In the following theorem, the relationship between the operators $_{1}N$ and $_{1}J$ is presented, a result similar to that known from classical calculus.

\begin{theorem}
Let $f$ be $_1N$-differentiable function in $(t_{0},\infty )$ with $\alpha \in (0,1]$. Then for all $t>t_{0}$ we have

\begin{enumerate}
\item[a)] $_{1}J_{t_{0}}^{\alpha }\left( _{1}N^{\alpha
}f(t)\right) =f(t)-f(t_{0})$.
\item[b)] $_{1}N^{\alpha }\left( _{1}J_{t_{0}}^{\alpha }f(t)\right) =f(t)$.
\end{enumerate}
\end{theorem}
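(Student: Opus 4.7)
The plan is to reduce both statements to the classical Fundamental Theorem of Calculus by exploiting property (f) of the earlier theorem, which identifies $_1N^\alpha f(t)$ with the weighted classical derivative $e^{t^{-\alpha}} f'(t)$ whenever $f$ is differentiable. Since the integral operator is defined by $_1J_{t_0}^\alpha g(t) = \int_{t_0}^t g(s) e^{-s^{-\alpha}}\,ds$, the factor $e^{-s^{-\alpha}}$ inside the integral is precisely designed to cancel the factor $e^{s^{-\alpha}}$ that property (f) introduces, so the two operators will compose to the identity after a one-line computation.

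For part (a), I would first invoke property (f) to write $_1N^\alpha f(s) = e^{s^{-\alpha}} f'(s)$. Substituting into the definition of $_1J_{t_0}^\alpha$ gives
\begin{equation*}
_1J_{t_0}^\alpha\bigl(_1N^\alpha f(t)\bigr) \;=\; \int_{t_0}^{t} \frac{_1N^\alpha f(s)}{e^{s^{-\alpha}}}\,ds \;=\; \int_{t_0}^{t} f'(s)\,ds \;=\; f(t)-f(t_0),
\end{equation*}
by the classical Fundamental Theorem of Calculus, yielding (a).

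For part (b), I would set $F(t) := {}_1J_{t_0}^\alpha f(t) = \int_{t_0}^t f(s) e^{-s^{-\alpha}}\,ds$. Assuming $f$ is continuous (which is implicit in being $_1N$-differentiable on the interval), the Fundamental Theorem of Calculus gives $F'(t) = f(t) e^{-t^{-\alpha}}$, so $F$ is classically differentiable. Applying property (f) once more,
\begin{equation*}
_1N^\alpha F(t) \;=\; e^{t^{-\alpha}} F'(t) \;=\; e^{t^{-\alpha}}\cdot\frac{f(t)}{e^{t^{-\alpha}}} \;=\; f(t),
\end{equation*}
which is (b).

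The only real subtlety, and the place where I would be most careful, is the regularity hypothesis. Property (f) explicitly requires ordinary differentiability of $f$, while the theorem statement only posits $_1N$-differentiability. Since $e^{t^{-\alpha}}$ is bounded away from zero on $(t_0,\infty)$ for $t_0>0$, the existence of the $_1N^\alpha$-limit is equivalent to the existence of the classical derivative on this interval, so property (f) applies and the reduction to the classical FTC is legitimate. This equivalence is the one point I would want to verify cleanly before invoking property (f), because without it the chain of identities above is formal rather than rigorous.
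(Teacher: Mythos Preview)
Your argument is correct and is exactly the natural reduction to the classical Fundamental Theorem of Calculus via property~(f). The paper itself does not give a proof at all: its entire proof reads ``See \cite{NGL}'', deferring to the original source. Your write-up therefore supplies what the paper omits, and the approach you take---cancelling the weight $e^{s^{-\alpha}}$ against the $e^{-s^{-\alpha}}$ in the integral and invoking the ordinary FTC---is essentially the only reasonable route and is presumably what appears in the cited reference. Your closing remark that $_1N$-differentiability on $(t_0,\infty)$ with $t_0>0$ is equivalent to ordinary differentiability (because the kernel $e^{t^{-\alpha}}$ is continuous and nonvanishing there) is the right justification for applying property~(f), and it is worth keeping, since neither the present paper nor the statement makes that equivalence explicit.
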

\begin{proof}
See \cite{NGL}
\end{proof}

This derivative, and some variants, proved useful in various application problems (see \cite{AFNRRS,FGNRS,FMNS,GLN,GLN2,GN,MMN,MN,NT,NRSi}).

\subsection{The N-derivative.}

In \cite{NGLK} a generalized derivative was defined as follows (see also \cite{FNRS,ZL}).

\begin{definition}	\label{d:2} 
Given a function $\psi :[0,+\infty )\rightarrow \mathbb{R}$. Then the N-derivative of $\psi $ of order $\alpha $\ is defined by	

	\begin{equation}\label{e:1}
		N_{F}^{\alpha }\psi (\tau )=\underset{\varepsilon \rightarrow 0}{\lim }\frac{\psi (\tau +\varepsilon F(\tau ,{\alpha }))-\psi (\tau )}{\varepsilon }
	\end{equation}
	
	for all $\tau >0$, $\alpha \in (0,1)$ being $F(\tau ,\alpha )$ is some
	function.
	
	If $\psi $ is N-differentiable in some $(0,\alpha )$, and $\underset{\tau \rightarrow 0^{+}}{\lim }N_{F}^{\alpha }\psi (\tau )$ exists, then
	define $N_{F}^{\alpha }\psi (0)=\underset{\tau \rightarrow 0^{+}}{\lim }N_{F}^{\alpha }\psi (\tau )$, note that if $\psi $ is differentiable, then $N_{F}^{\alpha }\psi (\tau )=F(\tau ,\alpha )\psi ^{\prime }(\tau )$ where $\psi ^{\prime }(\tau )$ is the ordinary derivative.
\end{definition}

\textbf{Examples}. Next we will see some particular non-conformable cases, obtained from certain $F$ kernels as a function of the well-known biparametric Mittag-Leffler function.

\begin{enumerate}
\item Mellin-Ross Function. So, we obtain 

\begin{equation*}
{ E }_{ t }(\alpha ,a)=t^{ \alpha  }E_{ 1,\alpha +1 }(at)=t^{ \alpha  }\sum _{ k=0 }^{ \infty  }{ \frac { (at)^{ k } }{ \Gamma (\alpha +k+1) }  } 
\end{equation*}

with $E_{1,\alpha +1}(.)$ the the aforementioned Mittag-Leffler two-parameter function. Hence, we have $\underset { \alpha \rightarrow 1 }{ \lim } N_{ E_{ t }(\alpha ,a) }^{ \alpha  }f(t)=f'(t)tE_{ 1,2 }(at)$, i.e.,

\begin{equation*}
N_{ E_{ t }(1,a) }^{ 1 }f(t)=f'(t)t\sum _{ k=0 }^{ \infty  }{ \frac { (at)^{ k } }{ \Gamma (k+2) }  }.
\end{equation*}

\item Robotov's Function. In this case we have

\begin{equation*}
R_{ \alpha  }(\beta ,t)=t^{ \alpha  }\sum _{ k=0 }^{ \infty  }{ \frac { \beta ^{ k }t^{ k(\alpha +1) } }{ \Gamma (1+\alpha )(k+1) }  } =t^{ \alpha  }E_{ \alpha +1,\alpha +1 }(\beta t^{ \alpha +1 })
\end{equation*}

like before, $E_{\alpha +1,\alpha +1}(.)$ is the the aforementioned Mittag-Leffler two-parameter function. So, we have $\underset { \alpha \rightarrow 1 }{ \lim } N_{ R_{ \alpha  }(\beta ,t) }^{ \alpha  }f(t)=f'(t)tE_{ 2,2 }(\beta t^{ 2 })$ and

\begin{equation*}
N_{ R_{ 1 }(\beta ,t) }^{ 1 }f(t)=\frac { f'(t)t }{ \Gamma (2) } \sum _{ k=0 }^{ \infty  }{ \frac { \beta ^{ k }t^{ 2k } }{ (k+1) }  }. 
\end{equation*}

\item Let $F(t,\alpha )={ E }_{ 1,1 }({ t }^{ -\alpha  })$. So, from Definition \ref{d:1}, we obtain  the derivative $_{ 1 }{ N }^{ \alpha  }f(t)$ defined in \cite{GLLMN} (and \cite{NGL}).

\item Be now $F(t,\alpha )={ { E }_{ 1,1 }({ t }^{ -\alpha  }) }_{ 1 }$, in this case we have $F(t,\alpha )={ \frac { 1 }{ { t }^{ \alpha  } }  }$, a new derivative with a remarkable propertie $\lim _{ t\rightarrow \infty  }{  N_{ \frac { 1 }{ { t }^{ \alpha  } } }^{ \alpha  }f(t) } =0$, i.e., goes to $0$ as $t$ goes to $+\infty$.

\item If we now take the development of function $E$ to order 1, we have $E_{ a,b }(t^{ -\alpha  })=1+\frac { 1 }{ { t }^{ \alpha  } } $. 
Then $\lim _{ t\rightarrow \infty  }{ { N }_{ 1+\frac { 1 }{ { t }^{ \alpha  } } }^{ \alpha  }f(t) } =f'(\infty)$, that is, it coincides with the classical derivative at infinity, if it exists.
\end{enumerate}

In this way, we can build a theoretical body for these local derivatives, similar to the existing one for the classical derivative and integral.

\begin{remark}
The generalized derivative defined above is not fractional (as we noted above), but it does have a very desirable feature in applications, its dual dependency on both $ \alpha $ and the kernel expression itself, with $ 0 <\alpha \leq 1 $ in \cite{KHYS} the conformal derivative is defined by putting $ F (t,\alpha) = t^{1- \alpha} $, while in \cite{GLLMN} the nonconforming derivative is obtained with $ F (t,\alpha) = e^{t^{-\alpha}} $ (see also \cite{NGL}). This generalized derivative, in addition to the aforementioned cases, contains as particular cases practically all known local operators and has proved its utility in various applications, see, for example, \cite{BKMN,CN,GKN,GLN,GNG,HNRF,KLN,LNV,MMN,MN,MSBFN,N21,N22,NGA,NJ,NQ,NQG,NRSi,NT,VKN,VNL}.
\end{remark}

\begin{remark}
One of the characteristics of this generalized derivative is the fact that $ N_{F}^{2\alpha}f(t) \neq N_{F}^{\alpha}\left( N_{F}^{\alpha } f(t)\right) $, that is, it is necessary to indicate successive derivatives in the second way. Obviously, if $ F \equiv 1 $, the ordinary derivative is obtained.
\end{remark}

\begin{remark} \label {r:n}
From the above definition, it is not difficult to extend the order of the derivative for $0 \le n - 1 <\alpha \le n $ by putting

\begin{equation} \label{e:n}
N_{F}^{\alpha }h(\tau) = \lim_{\varepsilon \rightarrow 0}\frac{h^{(n - 1)}(\tau + \varepsilon F(\tau, {\alpha})) - h^{(n - 1)}(\tau)}{\varepsilon}.
\end{equation}

If $h^{(n)}$ exists on some interval $I \subseteq \mathbb{R}$, then we have $N_{F}^{\alpha}h(\tau) = F(\tau, \alpha)h^{(n)}(\tau)$, with $0 \le n - 1 < \alpha \le n$.
\end{remark}

Slightly more recent, in \cite{FNRS} a notion of generalized fractional derivative is defined, which is general from two points of view:

1) Contains as particular cases, both conformable and non-conformable derivatives.

2) It is defined for any order $\alpha>0$.

Given $s \in \mathbb{R}$, we denote by $\left\lceil s \right\rceil$ the \emph{upper integer part} of $s$, i.e., the smallest integer greater than or equal to $s$.

\begin{definition}
Given an interval $I \subseteq (0,\infty)$, $f: I \rightarrow {\mathbb{R}}$, $\alpha \in \mathbb{R}^+$ and a continuous function positive $T(t,\alpha )$,
the \emph{derivative} $G_{T}^{\alpha }f$ of $f$ of order $\alpha$ at the point $t \in I$ is defined by

\begin{equation}
G_{T}^{\alpha }f(t)=\lim_{h\rightarrow 0} \frac{1}{h^{\left\lceil a \right\rceil}}\sum_{k=0}^{\left\lceil a \right\rceil}(-1)^{k}\binom{\left\lceil a \right\rceil}{k}f\big(t-khT(t,\alpha )\big).
\end{equation}
\end{definition}

In $2018$, a derivative operator is defined on the real line with a limit process as follows (see \cite{M}). For a given function $p$ of two variables, the symbol ${ D }_{ p }f(t)$ defined by the limit 
${ D }_{ p }f(t)=\lim_{\varepsilon \rightarrow 0 }{ \frac { f(p(t,\varepsilon ))-f(t) }{ \varepsilon } } $, as long as the limit exists and is finite, it will be called the derivative $p$ of $f$ at $t$ or the generalized derivative from $f$ to $t$ and, for brevity, we also say that $f$ is $p$-differentiable in $t$. In the case that it is a closed interval, we define the p-derivative at the extremes as the respective side derivatives. Starting from this definition, the derivative of order $\alpha$ of a function is constructed as the following limit:

\begin{equation}
{ { D }_{ p }^{ \alpha } }f(t)=\lim_{\varepsilon \rightarrow 0 }{ \frac { f(p(t,\varepsilon ,\alpha ))-f(t ) }{ \varepsilon } } ,\quad 0<\alpha <1,
\end{equation}

where it is understood that in the case $\alpha=1$ we have the ordinary derivative. It is clear that if $f$ is differentiable in $t$, then ${ { D }_{ p }^{ \alpha } }f(t)={ p }_{ h }(t,0,\alpha ) f'(t),\quad 0<\alpha <1$. Note that there are no sign restrictions on the function $p$ nor in its partial derivative ${ p }_{ h }(t,0,\alpha )$.

There is an additional detail that we want to point out, in \cite{NGLK} the following is pointed out.

We previously said that one of the features used to affirm that a derivative operator was fractional is that said operator violates the Leibniz Rule. The interesting thing is that a new local derivative can be built that does not comply with said rule, therefore, this non-compliance with the Leibniz Rule cannot be an essential condition for a derivative operator to be a fractional derivative, that is, there are still issues unresolved theories. To build such a derivative, we must be clear that failure to comply with this rule not only depends on the incremental quotient itself, but also on a factor that we must add to the incremented function, which will result in no symmetry in the final result.

Following the ideas of \cite{Z}, from \eqref{e:1} we can build the following derivative $\left( \alpha +\beta =1\right) $:

\begin{equation} \label{e:2}
DH_{\beta }^{\alpha }f(t):=\underset{\varepsilon \rightarrow 0}{\lim }\frac{H(\varepsilon ,\beta )f(t+\varepsilon F(t,\alpha ))-f(t)}{\varepsilon }
\end{equation}

with $H(\varepsilon ,\beta )\rightarrow k$ if $\varepsilon \rightarrow 0$.
In the case that $k\equiv 1$, we can consider two simple cases:
\begin{enumerate}
\item[I)] $H(\varepsilon ,\beta )=1+\varepsilon \beta $ as in \cite{Z} and so

\begin{equation*}
DL_{\beta }^{\alpha }f(t):=\underset{\varepsilon \rightarrow 0}{\lim }\frac{(1+\varepsilon \beta )f(t+\varepsilon F(t,\alpha ))-f(t)}{\varepsilon }.
\end{equation*}

If $F(t,\alpha )=e^{t^{-\alpha }}$, in this case we have a more general operator of $_1{N}$. So, we obtain:

\begin{equation} \label{e:3}
NL_{2}^{\alpha }f(t):=\underset{\varepsilon \rightarrow 0}{\lim }\frac{(1+\varepsilon \beta )f(t+\varepsilon e^{t^{-\alpha }})-f(t)}{\varepsilon }.
\end{equation}
\item[II)] $H(\varepsilon ,\beta )=1+\varepsilon \beta ^{r}$, $r>0$, in this way we
obtain

\begin{equation*}
DP_{\beta }^{\alpha }f(t):=\underset{\varepsilon \rightarrow 0}{\lim }\frac{(1+\varepsilon \beta ^{r})f(t+\varepsilon F(t,\alpha ))-f(t)}{\varepsilon }.
\end{equation*}

Refer to our $_{1}N$-derivative of \cite{GLLMN}  we have:

\begin{equation} \label{e:4}
NP_{2}^{\alpha }f(t):=\underset{\varepsilon \rightarrow 0}{\lim }\frac{(1+\varepsilon \beta ^{r})f(t+\varepsilon e^{t^{-\alpha }})-f(t)}{\varepsilon }.
\end{equation}

If $k\neq 1$, as $e^{x}=1+x+\frac{x^{2}}{2!}+...$we can take (as a first possibility):
\item[III)] $H(\varepsilon ,\beta )=E_{1,1}(\varepsilon \beta )$ and so we have

\begin{equation*}
DE_{\beta }^{\alpha }f(t):=\underset{\varepsilon \rightarrow 0}{\lim }\frac{E_{1,1}(\varepsilon \beta )f(t+\varepsilon F(t,\alpha ))-f(t)}{\varepsilon },
\end{equation*}

and regarding our $_1{N}$-derivative of \cite{GLLMN} we derive:

\begin{equation}\label{e:5}
NE_{\beta }^{\alpha }f(t):=\underset{\varepsilon \rightarrow 0}{\lim }\frac{E_{1,1}(\varepsilon \beta )f(t+\varepsilon e^{t^{-\alpha }})-f(t)}{\varepsilon }.
\end{equation}
\end{enumerate}

From \eqref{e:2} we can conclude with the following facts:

\begin{enumerate}
\item Being defined at a point, it is a local operator, similar to the classical derivative.
\item It is a derivative in the strict sense of the word, since its definition rests on the limit of an incremental quotient.
\item Does not satisfy the Leibniz Rule, so for \eqref{e:3} we have (the
results are similar for \eqref{e:4} and \eqref{e:5}):

\begin{equation*}
NL_{2}^{\alpha }\left[ f(t)g(t)\right] =\left( N_{2}^{\alpha
}f(t)\right) g(t)+f(t)\left( N_{F}^{\alpha }g(t)\right) ,
\end{equation*}

Also for \eqref{e:3} we have (again the calculations for \eqref{e:4} and \eqref{e:5} are very
similar):
\item If $\alpha =0$, $\beta =1$ then $N_{2}^{\alpha
}f(t)=N_{F}^{0}f(t)+f(t)=(1+e)f(t).$
\item If $\alpha =1$, $\beta =0$ then 
\begin{equation*}
N_{ 2 }^{ 1 }f(t)=N_{ { e }^{ { t }^{ -1 } } }^{ 1 }f(t)=\underset { \varepsilon \rightarrow 0 }{ \lim   } \frac { f(t+\varepsilon e^{ t^{ -1 } })-f(t) }{ \varepsilon  } =e^{ t^{ -1 } }\left[ \underset { \varepsilon \rightarrow 0 }{ \lim   } \frac { f(t+\varepsilon e^{ t^{ -1 } })-f(t) }{ \varepsilon e^{ t^{ -1 } } }  \right] =e^{ t^{ -1 } }f^{ \prime  }(t),
\end{equation*}
if $f$ is derivable.
\item Assuming that in \eqref{e:5} the limit exists, then we obtain

\begin{equation} \label{e:6}
NL_{\beta }^{\alpha }f(t)=N_{F}^{\alpha }f(t)+\beta f^{\prime }(t).
\end{equation}
\item Regrettably, ``we lose" the Chain Rule, valid for our $N$-derivative, so for $NL_{\beta }^{\alpha }$ we have:

\begin{equation*}
NL_{\beta }^{\alpha }\left[ f(g(t))\right] =N_{F}^{\alpha }f(g(t))+\beta
f(g(t)).
\end{equation*}

If $g(t)=t$, the above expression is a generalization of proportional derivative of \cite{AU}.
\item From \eqref{e:6} we derive that

\begin{equation*}
\underset{t\rightarrow \infty }{\lim}NL_{\beta }^{\alpha }f(t)=\underset{t\rightarrow \infty }{\lim}N_{F}^{\alpha }f(t)+\underset{t\rightarrow \infty }{\lim}\beta f^{\prime }(t)=f^{\prime }(t)+\beta f(\infty ).
\end{equation*}

From this we can conclude the following: if the term $\beta f(\infty )$ exists, then the derivative $N_{\beta }^{\alpha }f(t)$ is just a ``translation" of the classical derivative of the function, when $t$ tends to $\infty $, this is significant, because it does not affect the qualitative behavior of the ordinary derivative, which is important in the asymptotic study of the solutions of fractional differential equations with $NL_{\ beta}^{\alpha}$. In the event that said limit does not exist, it is impossible to study asymptotically the solutions of said $NL_{\beta }^{ \alpha}$-differential equations.

\item If we go back to the equation \eqref{e:2}, it is clear to readers that in the function $H(\varepsilon ,\beta )$ used, more general functions can be used, which would extraordinarily complicate the calculations and, from the point of view From a theoretical point of view, I would not add any essential detail. It is clear that this does not complete the analysis of which functions $H(\varepsilon ,\beta )$ can be considered in new local derivatives that, while maintaining locality, violate the Leibniz Rule. Therefore, as we said, this Rule cannot be considered an essential feature of a fractional derivative.
\end{enumerate}

\subsection{Most recent results.}

The Gohar fractional derivative (see \cite{GYD}) is a local derivative that generalizes the notion of the classical derivative through a limit incorporating a power factor. It is distinguished by being a tool that maintains the essential properties of traditional calculus, such as linearity and the product and quotient rules, but applied in a fractional-order context. Its approach is simple and straightforward, facilitating its use in various applications.

It was developed to offer a simple and powerful alternative for modeling complex phenomena in which integer-order derivatives are not sufficient. Its local structure makes it ideal for the analysis of systems that do not exhibit a significant long-term memory effect, but whose dynamics benefit from the generalization of the derivative's order.

The Vuk Stojiljković's D-derivative (cf. \cite{S}) is a local operator that flexibly generalizes the notion of the classical derivative. Its limit definition incorporates a differentiable function $g(t)$, which acts as a “weight" or “scale" factor. This function is key to adapting the operator to the specific characteristics of the problem being modeled. If $g(t) = t$, the conformal derivative is recovered, and if $g(t) = 1$, the classical derivative is obtained.

Its main advantage is that it provides a unified framework for conformal operators. By choosing the function g(t), a wide range of behaviors can be modeled, making it a versatile tool in mathematical physics and engineering.

The Beta Derivative (\cite{Atan,AD}), is a form of local fractional derivative that generalizes the conformable derivative. Its boundary definition includes an exponential weight function, allowing it to adjust the “memory effect" in a more controlled manner. It is presented as a powerful alternative for modeling systems with variable memory.

Its main advantage is that it allows greater flexibility in modeling complex phenomena, especially in anomalous diffusion and fluid mechanics, where memory effects can vary and decay over time. The parameter $\beta$ offers fine tuning that improves the accuracy in describing these systems.

The Conformable Ratio Derivative (in \cite{C}) is a generalization of the conformable derivative based on the idea of a “quotient" of derivatives, where the operator is defined by a ratio of differences of functions. This structure allows it to maintain the classical properties of the derivative while introducing a fractional order parameter that can be adjusted to model more complex phenomena.

It was developed to offer a more flexible alternative to existing conformal operators, maintaining the ease of calculation and the rules of traditional calculus, making it useful for solving differential equations.

In \cite{GG} the authors defined the derivative of H. Guebbai and M. Ghiat  is a local operator that generalizes the conformal derivative and the Katugampola derivative by introducing a general weight function into its definition. The choice of this weight function allows the operator to be adapted to the characteristics of the problem being modeled, making it a flexible and powerful tool.

Its objective is to unify several definitions of local derivatives into a single framework, allowing greater flexibility and versatility in modeling complex phenomena.

The derivative of Almeida et al. (2016)

This derivative is a local operator that generalizes the notion of a classical derivative through a limit incorporating a general function. Its main characteristic is that it is defined as a generalization of Khalil's conformable derivative, including a weight function that modifies the increment in the limit.

The objective of this operator is to provide a more flexible tool for modeling physical and biological phenomena, since the weight function can be chosen to describe the specific behavior of each system.\\

The derivative of A. Kajouni, A. Chafiki, K. Hilal, and M. Oukessou (2021)

This is a local fractional derivative based on a modification of the limit of the classical derivative, incorporating a weighting factor that depends on a general function. This definition allows it to generalize several already known local fractional operators, making it a unifying tool.

Its objective is to provide a broader framework for the study of local derivatives, allowing researchers to choose the most appropriate weight function for the problem they are solving, thereby increasing the versatility and accuracy of modeling.\\

The Tempered Derivative (cf. \cite{VCa,VCb})

This is one of the most influential generalizations, especially in physics and finance. The tempered derivative addresses a limitation inherent to classical fractional operators such as the Riemann-Liouville or Caputo operators.

While classical operators are based on a ``memory" that slowly decays with a power law, the tempered derivative introduces an exponential factor into its definition. This factor ``tempers" or moderates the system's memory, causing long-range correlations to decay more rapidly. It is the ideal tool for modeling phenomena that have a short- to medium-term memory, but not an ``infinite memory."

This feature makes it invaluable in modeling stochastic processes and diffusion equations that exhibit ``semi-heavy tails". For example, it is used to describe diffusion in porous media, turbulence in the atmosphere, or risk models in finance, where past effects do not have a permanent influence.

Other Generalizations and Advanced Derivatives

The constant evolution of the field has led to more sophisticated definitions that seek to address very specific problems or unify theories.\\

The Composite $N$ Derivative (see \cite{GNV})

Unlike the sequential application of operators, the ``composite $N$ derivative" is defined as a local operator that satisfies the fundamental properties of the classical derivative, such as the product rule. Its name comes from its structure, which is a ``composition" of a classical derivative with a power factor.

Its main advantage is that it provides a generalization of the standard derivative operator that is easier to handle analytically, as it preserves the basic rules of calculus. This makes it an attractive tool for solving fractional-order differential equations.\\

The Fractional Omega Derivative (cf. \cite{CNC})

Concept: This derivative, also known as the ``$\Omega$ derivative", is a generalization that relies on a function $\Omega(t)$ that defines the ``rhythm" of the differentiation. The definition of this derivative uses the function $\Omega(t)$ to modify the kernel of the classical fractional operator, offering great flexibility. It is a powerful tool for modeling systems whose memory dynamics change over time.

Its main advantage is that it allows modeling systems with varying memory properties, which is not possible with classical fractional-order derivatives that have a fixed kernel.\\

The Multi-Index Fractional Derivative (see \cite{VLNS})

While standard $N$-derivatives apply to functions a kernel with an unique expression, the multi-index derivative extends to kernels of multiple variables (sub-kernels). It uses a vector of fractional orders $\alpha=(\alpha_{1},\alpha_{2},,...,\alpha_{n})$ for a function of $n$ variables, where each component correspondents to a derivative of fractional order with respect to kernel $i$ of variable $i$. 

It is fundamental for modeling diffusion phenomena and memory processes in higher-dimensional spaces, such as the diffusion of contaminants in a $3D$ medium or the behavior of complex physical systems.\\

The Fractional $M$-derivative

Proposed by Sousa and Oliveira, the $M$-derivative is a local operator intrinsically linked to the Mittag-Leffler function, a function of great importance in fractional calculus that is often considered the ``exponential" of fractional calculus.

The fractional $M$-derivative is defined as a generalization of the classical derivative, but the scaling factor in its limit definition involves the Mittag-Leffler function. This gives it the ability to model behaviors that arise naturally in fractional systems.

The $M$-derivative was designed to provide a tool that, while local, also connects directly to the language of fractional calculus through the Mittag-Leffler function, facilitating the analysis of its properties and applications. It is a powerful tool for studying the dynamics of memory systems.

\section{Conclusions}

In this paper, we have presented a sketch of the latest developments obtained in the Non-Integer Order Calculus. Of course, they are not all, for example in \cite{VLNS} a multi-index derivative is presented that generalizes the previous definitions and includes as a particular case the derivative presented in \cite{RFCDM}.

Various applications of several of the aforementioned operators can be consulted in \cite{LNV2,NRMF,NR,NT}.

All of the above shows that this topic is a fruitful field and has not finished giving us good results. 

Further discussions about local differential operators can be found in \cite{BGHHN,PRBN}.

\end{document}